\documentclass[12pt,thmsa]{article}
\usepackage{amsfonts}
\usepackage{amsmath}
\usepackage{dsfont}
\usepackage{amsthm}
\usepackage{amssymb, graphics, setspace}
\usepackage[useregional]{datetime2}
\usepackage{inputenc}
\newcommand{\mathsym}[1]{{}}
\newcommand{\unicode}[1]{{}}
\newcommand{\re}{\mathrm{e}}
\newcommand{\ri}{\mathrm{i}}
\newcommand{\rd}{\mathrm{d}}
\newtheorem{theorem}{Theorem}
\newtheorem{definition}{Definition}

\setcounter{MaxMatrixCols}{10}

\begin{document}

\title {Translation surfaces in the Heisenberg Group}
\author{Christiam B. Figueroa\thanks{%
Principal professor of the Pontificia Universidad Cat\'{o}lica del Per\'{u}}}


\maketitle

\begin{abstract}
A translation  surface in the Heisenberg group is constructed as the product of two planar curves. We classify a type of such surfaces with vanishing intrinsic curvature  by analyzing the determinant of their Gauss map .\\[0.2cm]
 \textsl{MSC}: {53A10; 53A35}\\
 \textsl{Keywords}:  Gauss map, Heisenberg group,  translation surface, intrinsic curvature.
\end{abstract}

\label{first}


\section{Introduction}

The aim of this paper is  to investigate the translation surfaces in the 3-dimensional  Heisenberg group, $\mathcal{H}_3$,  which are defined as the product of two planar curves. We seek to   characterize these surfaces based when their intrinsic curvature is zero. To achieve this, we  analyze the determinant of the surface´s Gauss map.

The paper is structured as follows.  Section 2 provides an overview of the Gans model of hyperbolic geometry. Section 3 summarizes the essential geometry of the Heisenberg group. In the fourth section, we study general non-parametric surfaces in $\mathcal{H}_3$ and calculate their  fundamental forms and curvature. Section 5 ie devoted to  the Gauss map for this type of surface,including the formula for the determinant of its differential. Finally, in the last section we establish the classification of minimal and flat non-parametric translation surfaces based on the determinant of their Gauss map.

\section{The Gans Model }
This is a  model of the hyperbolic geometry, developed by David Gans, see \cite{gans1966new}.
Consider the Poincar\'{e} Disk
$$\mathbb{D}=\{(x,y): x^{2}+y^{2}<1\}$$
endowed with the metric
$$g(x,y)=\frac{4}{(1-x^{2}-y^{2})^{2}}(\rd x^{2}+\rd y^{2}). $$

We  will define a a diffeomorphism between   the Poincar\'{e} disk and the plane $\mathcal{P}: z=1$

Using the stereographic projection from the south pole $(0,0,-1)$ of the unit sphere, $S^2$, we define the following diffeomorphism  $\varphi$ between the upper hemisphere $S_{+}^{2}$ onto  the disk, $\mathbb{D}\subset \mathbb{R}^3$
$$\varphi(x,y,z)=(\frac{x}{z+1},\frac{y}{z+1},0).
$$
Similarly, considering  the stereographic projection from the origin $(0,0,0)$ of $S^2$, we define a diffeomorphism $\psi$ of $S_{+}^{2}$ onto  the plane  $\mathcal{P}: z=1$,
\begin{equation}\label{psi}
  \psi(x,y,z)=(\frac{x}{z},\frac{y}{z},1).
\end{equation}
Then, $F(x,y)=\psi\circ\varphi^{-1}$ is a diffeomorphism  from the disk $\mathbb{D}$ onto $\mathcal{P}$, where
\begin{equation}\label{diff}
  F(x,y,0)=(\frac{2x}{1-x^{2}-y^{2}},\frac{2y}{1-x^{2}-y^{2}},1)
\end{equation}
and the inverse is given by
$$F^{-1}(u,v,1)=(\frac{u}{1+\sqrt{1+u^{2}+v^{2}}},\frac{v}{1+\sqrt{1+u^{2}+v^{2}}},0)$$
Then the metric induced on  $\mathcal{P}$ by $F$ is given by
$$h(u,v)=\frac{(1+v^{2})\rd u^{2}-2uv\rd u\rd v+(1+u^{2})\rd v^{2}}{1+u^{2}+v^{2}}$$
The Riemannian space $(\mathcal{P},h)$ is the Gans model of  the hyperbolic geometry.

\subsection{Isometries}

Consider the Poincar\'{e} disk  as the subset $\mathbb{D}=\{z\in  \mathbb{C}: |z|<1\}$
of the complex plane and the Gans model $\mathcal{P}=\{w:w\in \mathbb{C}\}$.
We know that the set of orientation-preserving isometries of the Poincar\'{e} Disk have the form,
$$\rho(z)=\re^{\ri\theta}\frac{z-a}{1-\overline{a}z}, \qquad a\in\mathbb{D}.$$
And all  isometries of $\mathbb{D}$ are composed of $\rho$ with complex conjugation, that is reflection at the real axis.
Therefore, the isometry group of the Gans model is
$$\mathrm{Iso}(\mathcal{P})=\{F\circ\rho\circ F^{-1}: \rho\in \mathrm{Iso}(\mathbb{D})\},$$
where $F$ is as in $(\ref{diff})$. I shall highlight two cases:

If $\rho(z)=\re^{\ri\theta}z$ , then  $F\circ\rho\circ F^{-1}(w)=\re^{\ri\theta}w$, that is, a rotation about the origin $(0,0)$ is an isometry of the hyperbolic space $\mathcal{P}$.

On the other hand, if $\rho(z)=\overline{z}$, then  $F\circ\rho\circ F^{-1}(w)=\overline{w}$  is the
 reflection across the $u$ axis. Since rotation about the origin is an isometry, a reflection across the line $au+bv=0$  is an isometry too.

\section{The Geometry of the Heisenberg Group }
The 3-dimensional Heisenberg group $\mathcal{H}_{3}$ is a two-step nilpotent Lie group. It has the following standard representation
 in $GL_{3}(\mathbb{R})$
 $$\left[
      \begin{array}{ccc}
        1 & r & t \\
        0 & 1 & s \\
        0 & 0 & 1 \\
      \end{array}
    \right]
$$
with $r,s,t\in \mathbb{R}$.

In order to describe a left-invariant metric on $\mathcal{H}_{3}$, we note that the Lie algebra $\mathfrak{h}_{3}$ of $\mathcal{H}_{3}$ is given by the
matrices
$$A=\left[
      \begin{array}{ccc}
        0 & x & z \\
        0 & 0 & y \\
        0 & 0 & 0 \\
      \end{array}
    \right] $$
    with $x,y,z$ real. The exponential map $\exp:\mathfrak{h}_{3}\rightarrow \mathcal{H}_{3}$ is a global diffeomorphism, and is given by
    $$\exp(A)=I+A+\frac{A^{2}}{2}=\left[
                                   \begin{array}{ccc}
                                     1 & x & z+\frac{xy}{2} \\
                                     0 & 1 & y \\
                                     0 & 0 & 1 \\
                                   \end{array}
                                 \right].$$
    Using the exponential map as a global parametrization, with the identification of the Lie algebra $\mathfrak{h}_{3}$ with $\mathbb{R}^{3}$ given by
$$(x,y,z)\longleftrightarrow \left[
      \begin{array}{ccc}
        0 & x & z \\
        0 & 0 & y \\
        0 & 0 & 0 \\
      \end{array}
    \right]$$
    the group structure of $\mathcal{H}_{3}$ is given by
\begin{equation}\label{pr}
(a,b,c)\ast (x,y,z)=(a+x,b+y,c+z+\frac{ay-bx}{2}).
\end{equation}
    From now on, modulo the identification given by $exp$, we consider $\mathcal{H}_{3}$ as $\mathbb{R}^{3}$ with the product given in (\ref{pr}). The Lie   algebra bracket, in terms of the canonical basis $\{e_{1},e_{2},e_{3}\}$ of $\mathbb{R}^{3}$, is given by
    \vspace{0.5cm}
    $$[e_{1},e_{2}]= e_{3}, \qquad   [e_{i},e_{3}]=0,$$
    \vspace{0.5cm}
 with $i=1,2,3.$ Now, using $\{e_{1},e_{2},e_{3}\}$ as the orthonormal frame at the identity, we have the following left-invariant metric $\rd s^{2}$ in $\mathcal{H}_{3}$
$$\rd s^{2}=\rd x^{2}+\rd y^{2}+(\frac{1}{2}y \rd x-\frac{1}{2}x\rd y+\rd z)^{2}.$$
And the basis of the orthonormal left-invariant vector fields is given by
    $$E_{1}=\frac{\partial}{\partial x}-\frac{y}{2}\frac{\partial}{\partial z},\qquad  E_{2}=\frac{\partial}{\partial x}+\frac{x}{2}\frac{\partial}{\partial z},\qquad
    E_{3}=\frac{\partial}{\partial z}\cdot$$
Then the Riemann connection of $\rd s^{2}$, in terms of the basis $\{E_{i}\}$, is given by

$$
\begin{array}{ccccc}
  \nabla_{E_{1}}E_{2} & =& \frac{1}{2}E_{3}  &=&  -\nabla_{E_{2}}E_{1} \\\\
  \nabla_{E_{1}}E_{3} & =& -\frac{1}{2}E_{2} &=&  \nabla_{E_{3}}E_{1} \\\\
  \nabla_{E_{2}}E_{3} & =& \frac{1}{2}E_{1}  &=&  \nabla_{E_{3}}E_{2}
\end{array}
$$
and $\nabla_{E_{i}}E_{i}=0$ for $i=1,2,3$.

Using the fact that an isometry of $\mathcal{H}_{3}$ which fix the identity, is an automorphism of $\mathfrak{h}_{3}$, it is possible to show that evert isometry of $\mathcal{H}_{3}$ is of the form $L\circ A$ where  $L$ is a left translation in $\mathcal{H}_{3}$ and $A$  is in one of the following forms
$$\begin{bmatrix}
    \cos\theta & -\sin\theta\;\;\, & 0 \\
    \sin\theta & \cos\theta &0 \\
    0 & 0 & 1 \\

  \end{bmatrix}
\qquad \textrm{or} \qquad  \begin{bmatrix}
     \cos\theta & \sin\theta & 0 \\
    \sin\theta & -\cos\theta \;\;\, &0 \\
    0 & 0 & -1\;\; \\
   \end{bmatrix}.
   $$

 That is, $A$ represent a rotation around the $z$-axis or a composition of the reflection across the plane $z=0$ and a reflection across a line $y=mx$ for some $m\in \mathbb{R}$ .

 \section{Surfaces in $\mathcal{H}_{3}$}

Let $S$ be a graph of a smooth function $f:\Omega \rightarrow \mathbb{R}$ where $\Omega $ is an open set of $\mathbb{R}^{2}$. We consider the
following parametrization of $S$
\begin{equation}\label{paramet}
 X\left( x,y\right) =( x,y,f( x,y)),\qquad (x,y)\in \Omega.
\end{equation}
A basis of the tangent space $T_{p}S$ associated to this
parametrization is given by
\begin{equation}
\begin{array}{ccccc}
X_{x} & = & \left( 1,0,f_{x}\right) & = & E_{1}+\left( f_{x}+\frac{y}{2}%
\right) E_{3} \\\\
X_{y} & = & \left( 0,1,f_{y}\right) & = & E_{2}+\left( f_{y}-\frac{x}{2}%
\right) E_{3}%
\end{array}
\label{basis}
\end{equation}%
\noindent
where $f_x$ and $f_y$ denote the partial derivatives of $f$, with respecto $x$ and $y$ respectvely. And the  unit normal vector of $S$  is given by
\begin{equation}
\eta \big( x,y\big) =-\bigg(\frac{f_{x}+\displaystyle\frac{y}{2}}{w}\bigg)
E_{1}-\bigg( \frac{f_{y}-\displaystyle\frac{x}{2}}{w}\bigg) E_{2}+\frac{1}{w}E_{3}
\label{normal}
\end{equation}%
where
\begin{equation}
w=\sqrt{1+\left( f_{x}+ \frac{y}{2}\right) ^{2}+\left( f_{y}-\frac{x}{2}\right) ^{2}}.
\label{w}
\end{equation}
Then the
coefficients of the first fundamental form of $S$  are given by%
\begin{equation}
\begin{array}{ccccl}
E & = & \langle X_{x}, X_{x}\rangle & = & 1+\left( f_{x}+ \displaystyle\frac{y}{2}\right) ^{2} \\\\
F & = & \langle X_{y},X_{x}\rangle & = & \left( f_{x}+\displaystyle\frac{y}{2}\right) \left( f_{y}-%
\displaystyle\frac{x}{2}\right) \\\\\
G & = & \langle X_{y},X_{y}\rangle & = & 1+\left( f_{y}- \displaystyle\frac{x}{2}\right) ^{2}.%
\end{array}
\label{1ffund}
\end{equation}%
If $\nabla $ is the Riemannian connection of $\left( \mathcal{H}%
_{3},\rd s^{2}\right) $, by the Weingarten  formula for hypersurfaces, we have that%
\[
A_{\eta }v=-\nabla _{v}\eta ,\qquad  v\in T_{p}S
\]%
and the coefficients of the second fundamental form are given by
\begin{equation}
\begin{array}{ccccl}
L & = & -\langle\nabla _{X_{x}}\eta ,X_{x}\rangle & = & \displaystyle\frac{f_{xx}+( f_{y}-\frac{x%
}{2})( f_{x}+\frac{y}{2})}{w} \\\\
M & = & -\langle\nabla _{X_{x}}\eta ,X_{y}\rangle & = &\displaystyle \frac{f_{xy}+\frac{1}{2}\left(
f_{y}-\frac{x}{2}\right) ^{2}-\frac{1}{2}\left( f_{x}+\frac{y}{2}\right) ^{2}%
}{w} \\\\
N & = & -\langle\nabla _{X_{y}}\eta ,X_{y}\rangle & = & \displaystyle\frac{f_{yy}-\left( f_{y}-\frac{x%
}{2}\right) \left( f_{x}+\frac{y}{2}\right) }{w}\cdot%
\end{array}
\label{2ffund}
\end{equation}

Recall that the mean curvature of any surface of $\mathcal{H}_3$ can bes expressed in terms of its first and second fundamental forms, given. a parametrization,
$$H=\frac{1}{2}(\frac{EN+GL-2FM}{EG-f^2}$$
When the surface is graph of a smooth function $f$, we replace the coefficients given in (\ref{1ffund}) and (\ref{2ffund}) into the mean curvature formula
$$\frac{(1+q^2)f_{xx}-2pqf_{xy}+(1+p^2)f_{yy}}{(1+p^2+q^2)^{3/2}}=2H,$$
where $p=f_x+y/2$ and $q=f_y-x/2$. In particular, when $H=0$ the equation of the minimal graph is given by
\begin{equation}\label{meq}
  (1+q^2)f_{xx}-2pqf_{xy}+(1+p^2)f_{yy}=0
\end{equation}

We finish this section by calculating the Gaussian curvature for  a non-parametric surface, that is, a surface which is a graph over the $xy$-plane. This formula is also presented  in \cite{bekkar1991exemples}.
\begin{theorem}\label{curv}
Let $S$ be a non-parametric surface in $\mathcal{H}_{3}$ given by $(x,y,f(x,y))$ with $(x,y)\in \Omega\subset \mathbb{R}^{2}.$ Then the Gauss curvature of $S$ is given by
\begin{align*}
  w^{4}K & =w^{2}(f_{xy}^{2}-f_{xx}f_{yy}-\frac{1}{4})-(1+q^{2})\bigg((f_{xy}+\frac{1}{2})^{2}-f_{xx}f_{yy}\bigg) \\
   & -(1+p^{2})\bigg((f_{xy}-\frac{1}{2})^{2}-f_{xx}f_{yy}\bigg)+pq(f_{yy}-f_{xx},)
\end{align*}

\noindent where $p,q$ and $w$ are defined by
$$
p=f_{x}+\frac{y}{2},\;\; q=f_{y}-\frac{x}{2},\;\;\text{and}\;\; w=\sqrt{1+p^{2}+q^{2}}.
$$
\end{theorem}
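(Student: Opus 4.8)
The plan is to invoke the Gauss equation, which for a surface $S$ in the $3$-dimensional Riemannian manifold $(\mathcal{H}_3,\rd s^2)$ expresses the intrinsic curvature $K$ in terms of the ambient curvature and the second fundamental form. Writing $\bar R$ for the curvature tensor of $\mathcal{H}_3$ and using the tangent basis $\{X_x,X_y\}$ of $(\ref{basis})$, the Gauss equation reads
\[
K\,(EG-F^2)=\langle \bar R(X_x,X_y)X_y,X_x\rangle+(LN-M^2),
\]
where the coefficients of the two fundamental forms are those in $(\ref{1ffund})$ and $(\ref{2ffund})$. Since $EG-F^2=w^2$, multiplying through by $w^2$ produces the factor $w^4$ on the left, so the whole task reduces to computing the two terms on the right and regrouping them into the stated form.

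First I would compute the ambient curvature tensor of $\mathcal{H}_3$ in the orthonormal left-invariant frame $\{E_1,E_2,E_3\}$ directly from the connection table above, via $\bar R(E_i,E_j)E_k=\nabla_{E_i}\nabla_{E_j}E_k-\nabla_{E_j}\nabla_{E_i}E_k-\nabla_{[E_i,E_j]}E_k$. Using $[E_1,E_2]=E_3$ and $[E_i,E_3]=0$, one finds that the only nonvanishing sectional components are $\langle \bar R(E_1,E_2)E_2,E_1\rangle=-\tfrac34$ and $\langle \bar R(E_1,E_3)E_3,E_1\rangle=\langle \bar R(E_2,E_3)E_3,E_2\rangle=\tfrac14$, while the mixed components such as $\bar R(E_1,E_2)E_3$ and $\bar R(E_1,E_3)E_2$ all vanish. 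This diagonal structure of $\bar R$ is what keeps the subsequent expansion manageable.

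Next, since $X_x=E_1+pE_3$ and $X_y=E_2+qE_3$ by $(\ref{basis})$, I would expand $\langle \bar R(X_x,X_y)X_y,X_x\rangle$ by multilinearity; the vanishing of the mixed components collapses the sum to $\tfrac14(p^2+q^2-3)$. In parallel, a direct computation from $(\ref{2ffund})$, together with $Lw=f_{xx}+pq$, $Nw=f_{yy}-pq$ and $Mw=f_{xy}+\tfrac12(q^2-p^2)$, gives $LN-M^2$ as a rational expression with denominator $w^2$, whose numerator is
\[
f_{xx}f_{yy}+pq(f_{yy}-f_{xx})-f_{xy}^2+(p^2-q^2)f_{xy}-\tfrac14(p^2-q^2)^2-p^2q^2 .
\]

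Finally, substituting both pieces into the Gauss equation and clearing the factor $w^2$ yields $w^4K$ as a polynomial in $p,q$ and the second derivatives of $f$. The remaining work — and the main obstacle — is purely algebraic: after expanding $w^2\cdot\tfrac14(p^2+q^2-3)$ one must check that the quartic terms in $p,q$ (namely the $p^2q^2$ and $(p^2-q^2)^2$ contributions) cancel, and then reorganize the surviving expression into the asymmetric grouping of the statement. Concretely, using $w^2=1+p^2+q^2$ together with $1+p^2=E$ and $1+q^2=G$, one matches the coefficients of $f_{xy}^2$, $f_{xx}f_{yy}$, $f_{xy}$ and the constant term against the three blocks $w^2(f_{xy}^2-f_{xx}f_{yy}-\tfrac14)$, $-(1+q^2)\bigl((f_{xy}+\tfrac12)^2-f_{xx}f_{yy}\bigr)$ and $-(1+p^2)\bigl((f_{xy}-\tfrac12)^2-f_{xx}f_{yy}\bigr)$, with the cross term $pq(f_{yy}-f_{xx})$ carried over unchanged; verifying that these coefficients agree completes the proof.
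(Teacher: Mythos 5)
Your proposal is correct, and it supplies an actual argument where the paper gives none: the paper's ``proof'' of Theorem \ref{curv} is only a citation to Bekkar and to Figueroa's earlier paper, and the computation you outline is precisely the standard one those references carry out. I checked the key ingredients. The Gauss equation $K(EG-F^2)=\langle \bar R(X_x,X_y)X_y,X_x\rangle+LN-M^2$ with $EG-F^2=(1+p^2)(1+q^2)-p^2q^2=w^2$ is the right starting point. The curvature components you quote are correct: from the connection table one gets $\bar R(E_1,E_2)E_2=-\tfrac34E_1$, $\bar R(E_1,E_3)E_3=\tfrac14E_1$, $\bar R(E_2,E_3)E_3=\tfrac14E_2$, with all mixed components vanishing, so the multilinear expansion indeed collapses to $\tfrac14(p^2+q^2-3)$. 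Your numerator for $w^2(LN-M^2)$ is also right, and the final bookkeeping works: the quartic terms combine via $\tfrac14(p^2-q^2)^2+p^2q^2=\tfrac14(p^2+q^2)^2$, which cancels against the $\tfrac14 w^2(p^2+q^2-3)$ contribution, leaving
\begin{equation*}
w^4K=f_{xx}f_{yy}-f_{xy}^2+(p^2-q^2)f_{xy}+pq(f_{yy}-f_{xx})-\tfrac34-\tfrac12(p^2+q^2),
\end{equation*}
which is exactly what the asymmetric grouping in the statement expands to (its $f_{xy}^2$, $f_{xx}f_{yy}$, $f_{xy}$ and constant coefficients are $-1$, $+1$, $p^2-q^2$ and $-\tfrac34-\tfrac12(p^2+q^2)$ respectively). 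So your plan is not merely a plausible outline; every step closes.
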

\begin{proof}
See \cite{bekkar1991exemples} and \cite{figueroa2012gauss}
\end{proof}
In particular, when $K=0$ the equation of the flat graph is given by:

\begin{equation}\label{fq}
  \begin{aligned}
0&=w^{2}(f_{xy}^{2}-f_{xx}f_{yy}-\frac{1}{4})-(1+q^{2})\bigg((f_{xy}+\frac{1}{2})^{2}-f_{xx}f_{yy}\bigg)\\
   &-(1+p^{2})\bigg((f_{xy}-\frac{1}{2})^{2}-f_{xx}f_{yy}\bigg)+pq(f_{yy}-f_{xx})
\end{aligned}
\end{equation}

\section{The Gauss Map}
Recall that the Gauss map is a function  from an oriented surface, $S\subset \mathbb{E}^{3}$, to the unit sphere in the Euclidean space . It associates to every point on the surface its oriented unit normal vector. Considering the Euclidean space as a commutative Lie group, the Gauss map is just the translation of the unit normal vector at any point of the surface to the origin, the identity element of $\mathbb{R}^{3}$. Reasoning in this way we define a Gauss map in the following form

\begin{definition}
Let $S\subset G$ be an orientable hypersurface of a n-dimensional Lie group $%
G,$ provided with a left invariant metric. The map%
\[
\gamma :S\rightarrow S^{n-1}=\left\{ v\in \tilde{g}: \left\vert v\right\vert
=1\right\}
\]%
where $\gamma \left( p\right) =\rd L_{p}^{-1}\circ \eta \left( p\right) $, $\tilde{g}$ the Lie algebra of $G$
and $\eta $ the unitary normal vector field of $S,$ is called the Gauss map of $S.$
\end{definition}

\noindent We observe that
$$\rd\gamma\left( T_{p}S\right) \subseteq T_{\gamma \left( p\right) }S^{n-1}
 =  \left\{ \gamma \left( p\right) \right\} ^{\perp }  =
\rd L_{p}^{-1}\left( T_{p}S\right),$$

\noindent therefore $\rd L_{p}\circ \rd\gamma \left( T_{p}S\right)
\subseteq T_{p}S$ .

Now we  obtain a local expression of the Gauss map $\gamma $. In fact, we consider the following sequence of maps
$$\phi:\Omega \overset{X}\longrightarrow X(\Omega)\subset \mathcal{H}_{3}\overset{\gamma}\longrightarrow S^{2}\overset{\psi}\longrightarrow \mathcal{P}$$
where, $X$ is a parametrization of $S$  and $\psi$ is given by $(\ref{psi})$.

When $S$ is the graph of a smooth function $f\left(x,y\right)$ with $(x,y)$ in a domain $\Omega \subset \mathbb{R}^{2} $.  Then

\begin{equation}\label{gmap}
  \phi(x,y)=\left(-(f_{x}+\frac{y}{2}),-(f_{y}-\frac{x}{2})\right)
\end{equation}
and the Jacobian matrix  of $\phi$ is
\begin{equation}\label{jacob}
\rd \phi_{(x,y)}=\left(
                   \begin{array}{cc}
                     -f_{xx} & -f_{xy}-1/2 \\
                     -f_{xy}+1/2 & -f_{yy} \\
                   \end{array}
                 \right).
\end{equation}

\noindent Notice that
\begin{equation}
\det  \rd \phi_{(x,y)} =f_{xx}f_{yy}-f_{xy}^{2}+\frac{1}{4} \label{rank}
\end{equation}%
and we will call this expression, the determinant of the Gauss map at the point $(x,y).$ If $\Omega=\mathbb{R}^{2}$, the greatest lower bound of the absolute value
of $\det\rd \phi_{(x,y)}$ is zero. This was proved by A. Borisenko and E.  Petrov in \cite{borisenko2011surfaces}.

We know that in the Euclidean case the differential of the Gauss
map is just the second fundamental form for surfaces in
$\mathbb{R}^{3},$ this fact can be generalized for hypersurfaces
in any Lie group. The following theorem, see \cite{ripoll1991hypersurfaces},
states a relationship between the Gauss map and the extrinsic
geometry of $S.$

\begin{theorem}
\label{gauss}Let $S$ be an orientable hypersurfaces of a Lie group. Then
$$
\rd L_{p}\circ \rd\gamma _{p}\left( v\right) =-\left( A_{\eta }\left( v\right)
+\alpha _{\bar{\eta}}\left( v\right) \right) ,\qquad v\in T_{p}S
$$
where $A_{\eta }$ is the Weingarten operator, $\alpha _{\bar{\eta}}\left(
v\right) =\nabla _{v}\bar{\eta}$ and $\bar{\eta}$ is the left invariant
vector field such that $\eta \left( p\right) =\bar{\eta}\left( p\right).$
\end{theorem}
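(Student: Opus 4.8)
The plan is to unwind the definition of $\gamma$ along a curve and to compare the resulting ordinary derivative in the Lie algebra $\tilde{g}$ with the two covariant derivatives on the right. First I would fix $v\in T_pS$, choose a smooth curve $\alpha(t)$ in $S$ with $\alpha(0)=p$ and $\alpha'(0)=v$, and expand the unit normal along this curve in the left-invariant orthonormal frame $\{E_i\}$ as $\eta(\alpha(t))=\sum_i a_i(t)\,E_i(\alpha(t))$. With this notation the left-invariant field realizing $\eta$ at $p$ is precisely $\bar{\eta}=\sum_i a_i(0)\,E_i$, so that $\bar{\eta}(p)=\eta(p)$.

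The first key step exploits the left-invariance of the frame. Since $\rd L_{\alpha(t)}^{-1}$ sends each $E_i(\alpha(t))$ to $E_i(e)$, the composition $\gamma(\alpha(t))=\rd L_{\alpha(t)}^{-1}(\eta(\alpha(t)))$ collapses to the $\tilde{g}$-valued curve $\sum_i a_i(t)\,E_i(e)$. Because $\tilde{g}$ is a vector space, differentiating at $t=0$ gives $\rd\gamma_p(v)=\sum_i a_i'(0)\,E_i(e)$, and applying $\rd L_p$ (again by left-invariance) yields $\rd L_p\circ\rd\gamma_p(v)=\sum_i a_i'(0)\,E_i(p)$. Thus the left-hand side of the identity is governed solely by the derivatives of the coefficient functions $a_i$.

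The second step treats the right-hand side with the Leibniz rule for $\nabla$ along $\alpha$. Expanding gives $\nabla_v\eta=\sum_i a_i'(0)\,E_i(p)+\sum_i a_i(0)\,\nabla_v E_i$, whereas the constant coefficients of $\bar{\eta}$ give $\nabla_v\bar{\eta}=\sum_i a_i(0)\,\nabla_v E_i$. Writing $-(A_\eta(v)+\alpha_{\bar{\eta}}(v))=\nabla_v\eta-\nabla_v\bar{\eta}$ and subtracting, the frame-derivative terms $\sum_i a_i(0)\,\nabla_v E_i$ cancel exactly, leaving $\sum_i a_i'(0)\,E_i(p)$. Matching this with the expression produced in the first step establishes the identity.

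I expect the only delicate point to be the bookkeeping of what is held constant: the coefficients $a_i(t)$ vary along $\alpha$, but $\bar{\eta}$ freezes them at $t=0$, which is exactly why the two covariant derivatives share the term $\sum_i a_i(0)\,\nabla_v E_i$ and it drops out. The conceptual crux is that translating the normal back to the identity by $\rd L_p^{-1}$ converts the covariant derivative of $\eta$ into the flat derivative of its coefficient vector, and the correction $\alpha_{\bar{\eta}}$ is precisely the piece accounting for the non-constancy of the left-invariant frame along the curve.
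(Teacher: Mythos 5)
Your proof is correct. Note that the paper itself does not prove this theorem at all: it is quoted from Ripoll's paper on hypersurfaces of Lie groups (the citation \cite{ripoll1991hypersurfaces}), so there is no internal proof to compare against; your argument supplies exactly the missing content, and it is essentially the standard (Ripoll-style) proof. The mechanism is the right one: expanding $\eta(\alpha(t))=\sum_i a_i(t)E_i(\alpha(t))$ in a left-invariant orthonormal frame, left-invariance collapses $\gamma\circ\alpha$ to the $\tilde{g}$-valued coefficient curve $\sum_i a_i(t)E_i(e)$, giving $\rd L_{p}\circ \rd\gamma_{p}(v)=\sum_i a_i'(0)E_i(p)$; on the other side the Leibniz rule gives $\nabla_v\eta-\nabla_v\bar{\eta}=\sum_i a_i'(0)E_i(p)$, and with the paper's convention $A_{\eta}(v)=-\nabla_v\eta$ this equals $-\left(A_{\eta}(v)+\alpha_{\bar{\eta}}(v)\right)$. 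Your sign bookkeeping and the cancellation of the frame-derivative terms $\sum_i a_i(0)\nabla_v E_i$ are handled correctly; the only implicit step—identifying $T_{\gamma(p)}S^{n-1}$ with a subspace of the vector space $\tilde{g}$ so that differentiating the coefficient curve computes $\rd\gamma_{p}(v)$—is standard and harmless, as is the usual interpretation of $\nabla_v\eta$ as a covariant derivative along the curve $\alpha$.
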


As a consequence of this theorem we have the following result
\begin{theorem}
\label{vertical}There is no graph of a smooth function over $XY$ with constant Gauss map
\end{theorem}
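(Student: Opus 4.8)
The plan is to reduce the hypothesis ``constant Gauss map'' to the vanishing of the differential of the local representative $\phi$, and then extract an immediate contradiction from the explicit Jacobian in $(\ref{jacob})$. First I would observe that $\phi=\psi\circ\gamma\circ X$ is obtained from $\gamma$ by composing on the left with the diffeomorphism $\psi$ of $(\ref{psi})$ and on the right with the parametrization $X$ of $(\ref{paramet})$. Since $\psi$ is a diffeomorphism onto its image and $X$ is an embedding, neither composition can destroy or create nonconstancy; hence, over the connected domain $\Omega$, the Gauss map $\gamma$ is constant on $S$ if and only if $\phi$ is constant on $\Omega$, which is in turn equivalent to $\rd\phi_{(x,y)}\equiv 0$.

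Next I would invoke the explicit form $(\ref{jacob})$ of $\rd\phi_{(x,y)}$. Requiring every entry to vanish forces, in particular, the two off-diagonal entries to vanish simultaneously, i.e.\ $-f_{xy}-\tfrac12=0$ and $-f_{xy}+\tfrac12=0$ at every point of $\Omega$. These two equations would give $f_{xy}=-\tfrac12$ and $f_{xy}=\tfrac12$ at once, so subtracting them yields $1=0$, a contradiction. Consequently no smooth $f$ can have $\rd\phi\equiv 0$, and therefore no graph of a smooth function can have constant Gauss map. (An equivalent route is to integrate directly: constancy of $\phi$ means $f_x=c_1-\tfrac{y}{2}$ and $f_y=c_2+\tfrac{x}{2}$, whose compatibility condition $\partial_y f_x=\partial_x f_y$ reads $-\tfrac12=\tfrac12$, violating equality of mixed partials; this is the same obstruction seen through Clairaut's theorem.)

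I would close with a remark on the geometric meaning, since it explains why the obstruction is unavoidable rather than accidental. The constant $1$ separating the two off-diagonal entries of $(\ref{jacob})$ is exactly the structure constant of the bracket $[e_{1},e_{2}]=e_{3}$: through Theorem $\ref{gauss}$, the identity $\rd L_{p}\circ \rd\gamma_{p}=-(A_{\eta}+\alpha_{\bar\eta})$ shows that even when the Weingarten operator $A_{\eta}$ could be made to vanish, the term $\alpha_{\bar\eta}=\nabla_{(\cdot)}\bar\eta$ arising from the Riemannian connection contributes a nonzero skew part, so $\rd\gamma$ can never be identically zero. This is precisely the non-integrability of the horizontal distribution of $\mathcal{H}_{3}$ manifesting itself.

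The main point to make rigorous, and the only one requiring care, is the equivalence ``$\gamma$ constant $\iff$ $\phi$ constant'' in the first paragraph; the remainder is a one-line consistency check. The computational core presents no obstacle at all, since the contradiction follows from the constant $\pm\tfrac12$ off-diagonal terms alone and does not even use the diagonal entries $f_{xx},f_{yy}$.
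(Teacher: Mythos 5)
Your proposal is correct. Note that the paper itself gives no argument for Theorem \ref{vertical}; it simply defers to the reference \cite{figueroa2012gauss}, so the comparison is really with that citation, where the obstruction used is the same one you found: constancy of the local representative $\phi$ of $(\ref{gmap})$ forces $f_{x}+\tfrac{y}{2}$ and $f_{y}-\tfrac{x}{2}$ to be constant, and the mixed partials then demand $f_{xy}=-\tfrac12$ and $f_{xy}=\tfrac12$ simultaneously. Your reduction step is sound: $\gamma$ always takes values in the upper hemisphere (the $E_{3}$-component of $\eta$ in $(\ref{normal})$ is $1/w>0$), so $\psi$ is a diffeomorphism on the relevant range and $X$ is a bijection onto $S$, whence $\gamma$ constant is equivalent to $\phi$ constant; and the only direction you actually use ($\phi$ constant $\Rightarrow$ $\rd\phi\equiv 0$) does not even need connectedness of $\Omega$. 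Your closing remark tying the $\pm\tfrac12$ terms to the connection contribution $\alpha_{\bar\eta}$ in Theorem \ref{gauss} is a pleasant bonus, though not needed for the proof. In short: the proposal is a complete, self-contained proof of exactly the kind the paper outsources to its reference.
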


\begin{proof}See \cite{figueroa2012gauss}
\end{proof}

To end this section, we study the effect of the isometries of the Heisenberg group $\mathcal{H}_{3}$ on the  Gauss map of a surface.

\begin{theorem}

  Let $S$ be a graph of a smooth function $f:\Omega \rightarrow \mathbb{R}$ where $\Omega $ is an open set of $\mathbb{R}^{2}$ and $\phi:\Omega\rightarrow \mathcal{P}$ its Gauss map, where $X(\Omega)=S$.
\begin{enumerate}
  \item If $\rho_{\theta}:\mathcal{H}_{3}\rightarrow \mathcal{H}_{3}$ is a rotation about the $z$ axis by an angle $\theta$, then the Gauss map of $\rho_{\theta} (S)$ is $r_{\theta}\circ \phi$, where $r_{\theta}:\mathcal{P}\rightarrow \mathcal{P}$ is a rotation about the origin by an angle $\theta$.
  \item If $\sigma:\mathcal{H}_{3}\rightarrow \mathcal{H}_{3}$ is a reflection across the line $ax+by=0$ compound with the reflection about the plan $z=0$  then the gauss map of $\sigma (S)$ is $\tau\circ \phi$, where $\tau:\mathcal{P}\rightarrow \mathcal{P}$ is a reflection across the line $-bx+ay=0$.
\item If $L:\mathcal{H}_{3}\rightarrow \mathcal{H}_{3}$ is a left translation,  then the gauss map of $L(S)$ is $ \phi. $
\end{enumerate}
\end{theorem}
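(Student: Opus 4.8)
The plan is to treat the three items uniformly by observing that each isometry $\Phi$ sends the graph $S$ to a surface $\Phi(S)$ which, after a change of base coordinates, is again the graph of some function $g$; the claim then amounts to comparing the Gauss map $\phi_g$ computed from the master formula $(\ref{gmap})$ with the asserted transform of $\phi$. I would organize the argument around the intrinsic definition $\gamma(p)=\rd L_p^{-1}\eta(p)$ rather than grinding the chain rule, because the left-invariant definition interacts very cleanly with the two kinds of isometries of $\mathcal{H}_3$ identified earlier, namely left translations $L_a$ and the orthogonal maps $A$ fixing the identity.

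First I would record the behaviour of $\gamma$ itself, before projecting to $\mathcal{P}$. For a left translation $\Phi=L_a$, using $L_{a\ast p}=L_a\circ L_p$ together with the fact that $L_a$ is an orientation-preserving isometry (so $\eta_{\Phi(S)}(L_a p)=\rd L_a\,\eta_S(p)$), one gets $\gamma_{\Phi(S)}(L_a p)=\rd L_{a\ast p}^{-1}\rd L_a\,\eta_S(p)=\rd L_p^{-1}\eta_S(p)=\gamma_S(p)$, which is item 3. For an isometry $A$ fixing the identity the essential input is that $A$ is a group automorphism, so $A\circ L_p=L_{A(p)}\circ A$; differentiating at the identity gives $\rd(L_{A(p)}^{-1})\circ\rd A_p=A_\ast\circ\rd L_p^{-1}$, where $A_\ast=\rd A_e$ is the orthogonal matrix $A$ expressed in the frame $\{E_i\}$, which agrees with $\{\partial_x,\partial_y,\partial_z\}$ at $e$. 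Hence $\gamma_{\Phi(S)}(A(p))=\pm A_\ast\,\gamma_S(p)$, the sign accounting for whether $A$ preserves the chosen normal.

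Next I would push this forward through $\psi$. The map $\psi(x,y,z)=(x/z,y/z)$ is central projection from the origin, and its crucial feature here is that it is invariant under the antipodal map, $\psi(-v)=\psi(v)$; this is exactly what makes the ambiguous sign above disappear, so that $\phi_{\Phi(S)}(A(p))=(\psi\circ A_\ast)(\gamma_S(p))$. It then remains to identify $\psi\circ A_\ast$ as a motion of $\mathcal{P}$. For $A=\rho_\theta$ a rotation about the $z$-axis we have $\psi\circ\rho_\theta=r_\theta\circ\psi$, because central projection commutes with rotations fixing the axis, giving item 1. For the reflection case $A$ acts as $(x,y,z)\mapsto(R(x,y),-z)$ with $R$ the planar reflection in $ax+by=0$; since $\psi$ inverts the sign coming from $z\mapsto-z$, one finds $\psi\circ A_\ast=-R$, and $-R$ is precisely the reflection across the line perpendicular to $ax+by=0$, namely $-bx+ay=0$. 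This is item 2, with $\tau=-R$.

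The step I expect to be the only real obstacle is the reflection case, specifically keeping the geometry of item 2 straight: one must combine the planar reflection $R$ with the factor $z\mapsto-z$ and see that, under central projection, the $z$-flip becomes the point reflection $-I$ on $\mathcal{P}$, turning $R$ into $-R$ and thereby rotating the axis of reflection by a right angle. The orientation reversal of a reflection would ordinarily force careful sign bookkeeping for $\eta$, but the antipode-invariance of $\psi$ absorbs it, so I would flag that observation as the linchpin. As a safeguard I would also carry out the direct check: writing $g$ explicitly, for instance $g(\bar x,\bar y)=-f(R(\bar x,\bar y))$ in the reflection case, and substituting into $(\ref{gmap})$, the quantities $g_{\bar x}+\bar y/2$ and $g_{\bar y}-\bar x/2$ collapse into linear combinations of $p=f_x+y/2$ and $q=f_y-x/2$ that reproduce $r_\theta\circ\phi$ and $\tau\circ\phi$ exactly; this computation is routine and merely confirms the conceptual argument.
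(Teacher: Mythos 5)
Your argument is correct, but note that the paper offers no proof of this theorem to compare against: its entire proof is the line ``See \cite{figueroa2012gauss}''. Your proposal must therefore be judged as a self-contained replacement, and it holds up. The three pillars are all sound: (i) for left translations, $L_{a\ast p}=L_a\circ L_p$ plus invariance of the unit normal gives $\gamma_{L_a(S)}(L_a\ast p)=\gamma_S(p)$; (ii) for an isometry $A$ fixing the identity, the intertwining relation $\rd L_{A(p)}^{-1}\circ\rd A_p=A_\ast\circ\rd L_p^{-1}$ gives $\gamma_{A(S)}(A(p))=\pm A_\ast\,\gamma_S(p)$; (iii) the antipodal invariance $\psi(-v)=\psi(v)$ of the central projection absorbs the normal-orientation sign, and computing $\psi\circ A_\ast$ yields $r_\theta$ in the rotation case and $-R$, i.e.\ the reflection across the perpendicular line $-bx+ay=0$, in the reflection case. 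The direct check you describe also works: substituting $g(\bar x,\bar y)=-f(R(\bar x,\bar y))$ into (\ref{gmap}) gives $g_{\bar x}+\bar y/2=-(\cos\theta\,p+\sin\theta\,q)$ and $g_{\bar y}-\bar x/2=-(\sin\theta\,p-\cos\theta\,q)$ with $p=f_x+y/2$, $q=f_y-x/2$, which is exactly $\tau\circ\phi$ with $\tau=-R$. Two points should be made explicit to close the argument. First, step (ii) requires $A$ to be a group \emph{automorphism} of $\mathcal{H}_3$, not merely an isometry fixing the identity; this is what licenses $A\circ L_p=L_{A(p)}\circ A$, and it is a one-line check from the product (\ref{pr}): the rotation preserves the determinant $ay-bx$ and fixes the third coordinate, while for $\sigma$ the planar reflection flips the sign of $ay-bx$ and $z\mapsto -z$ flips the third coordinate, so the two sign changes cancel. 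Second, what you actually prove is $\phi_{\Phi(S)}\circ\bar\Phi=(\text{motion})\circ\phi$, where $\bar\Phi$ is the induced map on base domains ($R_\theta$, $R$, or a planar translation); since the two sides of the theorem's equation otherwise have different domains, this pointwise-correspondence reading is the only sensible interpretation of the statement, and you should say so rather than leave the identification implicit.
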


\begin{proof}
See \cite{figueroa2012gauss}

\end{proof}

\section{Translation surfaces in the Heisenberg group}
First, we assume that
\begin{eqnarray}
\nonumber
   \alpha(x)&=& (x,0,u(x)),\;\;x\in I \\
\nonumber
  \beta(y) &=& (0,y,v(y),\;\;y\in J
\end{eqnarray}
are two regular parameterize curves in the planes $XZ$ and  $YZ$, respectively.
 Then a translation parameterize surface is given by,
 $$X(x,y)=\alpha(x) \ast \beta(y)=(x,y,u(x)+v(y)+\frac{xy}{2}).$$
 So we considere this surfaces as a graph of a function

 \begin{equation}\label{emt}
   f(x,y)=u(x)+v(y)+\frac{xy}{2},
 \end{equation}
where $(x,y)\in I\times J$. From(\ref{rank}), we deduce that the determinant of the Gauss map for this surface is given by
$$\triangle =u''(x)v''(y).$$
The following two subsections are devoted to the study of minimal and flat translation surfaces through the determinant of their Gauss map. Recall that there is no graph such that its Gauss map is constant, see theorem (\ref{vertical}).

\subsection{Minimal non-parametric translation surface}
Combining the equation of a minimal  graph (\ref{meq}) with (\ref{emt}), we obtain the equation of minimal surface of this type as follows
\[(1+v'(y)^2)u''(x)-v'(y)(u'(x)+y)+(1+(u'(x)+y)^2)v''(y)=0
\]
When the determinant of the Gauss map  of this surfaces vanishes , that is $u''(x)v''(y)=0$, we show in Theorem 5.7 of \cite{figueroa2012gauss},  that the minimal graph, up to rigid motions (translation and rotation), is given by

  \begin{equation}\label{eqm}
    f(x,y)= \frac{xy}{2}+\frac{C}{2}[y\sqrt{1+y^2}+\ln(y+\sqrt{1+y^2})]
  \end{equation}

In \cite{inoguchi2012minimal}, J. Inoguchi proved that for a minimal translation surface of this type, should have $u''(x)=0$ or $v''(x)=0$. Therefore, $\triangle$ must be equal to zero and consequently  the surface given in (\ref{eqm}) is the unique minimal translation surface of this type.
\subsection{Flat non-parametric translation surface}
Similarly, by combining the equation of a flat graph (\ref{fq}) with (\ref{emt}), we  obtain the following flat surface equation,
\begin{equation}\label{cin}
  1+v'(y)^2=u''(x)v''(y)+v'(y)(y+u'(x))(v''(y)-u''(x))
\end{equation}
In analogy with the minimal case, we study these surfaces according to its Gauss map.
\begin{enumerate}
  \item If the determinant of its Gauss map is zero, that is, $\triangle =u''(x)v''(y)=0$. We consider two cases:
\begin{enumerate}
  \item[a)] If $u''(x)=0$ and $v''(y)\neq 0$. Let $u'(x)=A$, the above expression may be written as
  \begin{equation}
    1+(v'(y))^2-\frac{y}{2}\frac{d}{dy}(v'(y))^2=\frac{A}{2}\frac{d}{dy}(v'(y))^2
  \end{equation}
  Set $r(y)=(v'(y))^2$, we obtain the following ordinary differential equation
  $$\frac{A+y}{2}r'(y)-r(y)-1=0$$
  Solving this equation, we obtain
  $$r(y)=C(A+y)^2-1,\;\;C>0$$
  Then
  $$v'(y)=\sqrt{C(A+y)^2-1}$$
  Solving this differential equation,
  \begin{equation}\nonumber
  v(y)=\frac{(A+y)\sqrt{C(A+y)^2-1})}{2}+\frac{\ln|\sqrt{C(A+y)^2-1}-\sqrt{C}(A+y)| }{2\sqrt{C}}+D
  \end{equation}
  Since $u(x)=Ax+B$, we have, up to a vertical translation, the following flat non-parametric surface,
  \begin{multline}\label{c0}
    f(x,y)= \frac{xy}{2}+Ax + \frac{(A+y)\sqrt{C(A+y)^2-1})}{2}+\\
    \frac{\ln|\sqrt{C(A+y)^2-1}-\sqrt{C}(A+y)| }{2\sqrt{C}}
  \end{multline}
 where $A\in \mathbb{R}$ and $C>0$.

  \item[b)] If $u''(x)\neq0$ and $v''(y)= 0$. In this case, let $v´(y)=A\neq 0$. Replacing in the flat surface equation (\ref{cin}),
  $$1+A^2=-(y+u'(x))u''(x)$$
  That is,
  $$1+A^2+yu''(x)=-u'(x)u''(x)$$
  which is impossible, since $x$ and $y$ are independent variables. If $A=0$ we also arrive at a contradiction.
\end{enumerate}

  \item If the determinant of its  Gauss is different from zero, that is $\triangle=u''(x)v''(y)\neq0$. Set
\begin{align}
\label{H}
  H(y)&= 1+v'(y)^2-yv''(y)v'(y) \\\nonumber \\
\label{F}
  F(x,y)&= -(yu''(x)+u'(x)u''(x))\\\nonumber\\
\label{G}
  G(x,y)&= u''(x)+u'(x)v'(y)
\end{align}

Substituting into (\ref{cin}) we have
$$H(y)=F(x,y)v'(y)+G(x,y)v''(y)$$

Differentiating with respect  to $x$,
$$0=F_xv'(y)+G_xv''(y)$$
where
\begin{equation}\label{rr}
  F_x=-(yu'''(x)+(u'(x)u''(x))')\;\;\text{and} \;\;G_x=u'''(x)+u''(x)v'(y)
\end{equation}

Note that $v''(y)\neq 0$, so  if we set

\begin{equation}
\nonumber
  r(y)=\frac{v'(y)}{v''(y)}
\end{equation}

we obtain that
$$G_x+F_xr(y)=0$$

Substituting (\ref{rr}) into the above equation we obtain

\begin{equation}\label{3}
  u'''(x)=u'''(x)yr(y)+[u''(x)u'(x)]'r(y)-u''(x)v'(y).
\end{equation}

We have the following cases:

\begin{enumerate}
  \item[a)] If
$$u'''(x)=0$$
It follows that, $u''(x)=A\neq 0$ and from (\ref{3}), we obtain the following equation,
$$Ar(y)=v'(y)$$
Since $v''(y)\neq 0$, we have
$$v''(y)=A$$
If we replace in the flat surface equation, we obtain
$$1+(Ay+B)^2=A^2$$
which is impossible.

  \item[b)] If
  $u'''(x)\neq 0.$

  Differentiating equation (\ref{3})  with respect to the variable $y,$

  \begin{equation}\label{oo}
    u'''(x)(yr)'+[u''(x)u'(x)]'r'(y)-u''(x)v''(y)=0
  \end{equation}
  Since $\triangle=u''(x)v''(y)\neq 0$,  the above equation is equivalent to
  \begin{equation}\label{xx}
    v''(y)=\frac{u'''(x)}{u''(x)}(yr)'+\frac{[u''(x)u'(x)]'}{u''(x)}r'(y)
  \end{equation}

Differentiating (\ref{xx}) with respect $x$ and  by the independence of the  variables $x$ and $y$, it follows that
\begin{equation}\label{vy}
  \frac{(yr)'}{r'}=A
\end{equation}

Similarly, the equation (\ref{oo}) is equivalent to

\begin{equation}\label{yy}
  u''(x)=u'''(x)\frac{(yr)'}{v''}+[u'u'']'\frac{r'}{v''}
\end{equation}
Differentiation with respect $y$ we obtain
\begin{equation}\label{vx}
  [u'u'']'=Bu'''.
\end{equation}
$$$$
It easy to see that $B=-A$. Finally if we replace (\ref{vx}) and (\ref{vy}) in equation (\ref{oo}), we obtain
$$Au'''r'-Au'''r'-u''(x)v''(y)=0,$$
which is absurd, because $\triangle\neq0$

\end{enumerate}

\end{enumerate}

\noindent Consequently, we can state the following theorem:

\begin{theorem}
  Let the curves $\gamma_1$ and $\gamma_2$ be given by $\gamma_1(x)=(x,0,u(x))$ and $\gamma_2(y)=(0,y,v(y))$, respectively. The translation surface $S=\gamma_1*\gamma_2$, is a non-parametric surface given by $(x,y,f(x,y))$, where
  $$f(x,y)=\frac{xy}{2}+u(x)+v(y).$$
  Then, the determinant of its Gauss map is cero and

   \begin{multline}
   \nonumber
    f(x,y)= \frac{xy}{2}+Ax + \frac{(A+y)\sqrt{C(A+y)^2-1})}{2}+\\
    \frac{\ln|\sqrt{C(A+y)^2-1}-\sqrt{C}(A+y)| }{2\sqrt{C}}
  \end{multline}
  
\end{theorem}

If the constant $A$  vanishes, the result corresponds to the flat translation invariant surface previously obtained by J. Inoguchi \cite{inoguchi2005flat}.

Special thanks to Arshi Yousuf for his helpful observation, which helped improve the results of this paper.

 Christiam Figueroa \\
Departamento de Ciencias \\
Secci\'{o}n de Matem\'{a}ticas\\
Pontificia Universidad Cat\'{o}lica del Per\'{u}\\
Lima, Per\'{u}\\
{\it E-mail address}: {\tt cfiguer@pucp.pe}\\[0.3cm]

\label{last}

\end{document}